\DeclareMathOperator{\co}{co}
\renewcommand{\geq}{\geqslant}
\renewcommand{\leq}{\leqslant}
\newcommand{\NA}{\operatorname{NA}}
\newcommand{\supp}{\operatorname{supp}}
\newtheorem{theorem}{Theorem}[section]
\newtheorem{lemma}[theorem]{Lemma}
\newtheorem{corollary}[theorem]{Corollary}
\theoremstyle{definition}
\theoremstyle{remark}
\newtheorem{remark}[theorem]{Remark}
\numberwithin{equation}{section}
\def\fnote#1{\footnote}
\def\ignora#1{}
\def\n3#1{\left\vert  \! \left\vert \! \left\vert \, #1 \, \right\vert \!
  \right\vert \! \right\vert }
\newcommand{\pten}{\ensuremath{\widehat{\otimes}_\pi}}
\begin{document}

\title{Several remarks on norm attaining in tensor product spaces}

\author{Abraham Rueda Zoca}
\address{Universidad de Murcia, Departamento de Matem\'{a}ticas, Campus de Espinardo 30100 Murcia, Spain
	\newline
	\href{https://orcid.org/0000-0003-0718-1353}{ORCID: \texttt{0000-0003-0718-1353} }}
\email{\texttt{abraham.rueda@um.es}}
\urladdr{\url{https://arzenglish.wordpress.com}}

\thanks{The research of Abraham Rueda Zoca was supported by MCIN/AEI/10.13039/501100011033: Grants MTM2017-86182-P, PGC2018-093794-B-I00, PID2021-122126NB-C31 and PID2021-122126NB-C32;  by Fundaci\'on S\'eneca: ACyT Regi\'on de Murcia grant 20797/PI/18, and by Junta de Andaluc\'ia: Grants A-FQM-484-UGR18 and by FQM-0185.}

\subjclass[2020]{46B20, 46B28}

\keywords {Projective tensor product; strong subdifferentiability; $w^*$-Kadec-Klee property;  norm-attaining tensor }

\maketitle

\begin{abstract}
The aim of this note is to obtain results about when the norm of a projective tensor product is strongly subdifferentiable. We prove that if $X\pten Y$ is strongly subdifferentiable and either $X$ or $Y$ has the metric approximation property then every bounded operator from $X$ to $Y^*$ is compact. We also prove that $(\ell_p(I)\pten \ell_q(J))^*$ has the $w^*$-Kadec-Klee property for every non-empty sets $I,J$ and every $2<p,q<\infty$, obtaining in particular that the norm of the space $\ell_p(I)\pten \ell_q(J)$ is strongly subdifferentiable. This extends several results of Dantas, Kim, Lee and Mazzitelli. We also find examples of spaces $X$ and $Y$ for which the set of norm-attaining tensors in $X\pten Y$ is dense but whose complement is dense too. 
\end{abstract}

\section{Introduction}

The study of norm attaining functionals has been a long standing topic in functional analysis because it has shown to have strong connections with the structure of the underlying space. Probably the best example of this is the classical result of James which says that a Banach space $X$ is reflexive if, and only if, every linear continuous functional attains its norm \cite[Corollary 3.56]{checos}. Another example is the celebrated result due to Bishop and Phelps, which says that the set of norm attaining functionals is always dense \cite[Theorem 3.54]{checos}. The relevance of norm attaining elements opened the door, from the seminal paper of J. Lindenstrauss \cite{linds2}, to study the problem of when the set of norm attaining elements are dense for other kind of mappings such as bounded linear operators \cite{B,linds2,martin16}, bounded multilinear mappings \cite{gasp,Choi}, polynomials \cite{AGM,ADM} or Lipschitz mappings \cite{CGMR,godsurvey,kms}.

In the context of bilinear mappings, S. Dantas, S. K. Kim, H. J. Lee and M. Mazzitelli recently considered a new property related to norm-attainment: according to \cite[Definition 2.1]{dklm}, given two Banach spaces $X,Y$, we say that the pair $(X,Y)$ has the \textit{$L_{p,p}$ for bilinear mappings} if, given $\varepsilon>0$ and $(x,y)\in S_X\times S_Y$, there exists $\eta>0$ (which depends on $\varepsilon$ and on the pair $(x,y)$) satisfying that if a bilinear mapping $B:X\times Y\longrightarrow \mathbb R$  with $\Vert B\Vert=1$ satisfies $B(x,y)>1-\eta$ then there exists another bilinear mapping $G:X\times Y\longrightarrow \mathbb R$ with $\Vert G\Vert=1$, $G(x,y)=1$ and $\Vert B-G\Vert<\varepsilon$.

The interest on this property is double. On the one hand, this definition is a natural generalisation of the Bishop-Phelps-Bollob\'as property for bilinear mapping studied, for instance, in \cite{abgm}. On the other hand, it is a natural version for bilinear mapping of the classical characterisation of strongly subdifferentiable norms (see the formal definition in Section \ref{section:SSD}) given in \cite{fp}: a Banach space $X$ is strongly subdifferentiable  if, and only if, for every $\varepsilon>0$ and $x\in S_X$ there exists $\eta>0$ satisfying that if $y^*\in S_{X^*}$ verifies $y^*(x)>1-\eta$ then there exists $x^*\in S_{X^*}$ with $x^*(x)=1$ and $\Vert x^*-y^*\Vert<\varepsilon$.

Taking into account the isometric isomorphism $B(X\times Y)=(X\pten Y)^*$ coming from classical tensor product theory (see below), it is natural to think that the property $L_{p,p}$ for a pair $(X,Y)$ may be related with the strong subdifferentiability of the norm of $X\pten Y$. It turns out that if $X\pten Y$ is strongly subdifferentiable then the pair $(X,Y)$ has the $L_{p,p}$ for bilinear mappings (see e.g. \cite[Proposition 4.2]{djmr}) but the converse is not true \cite[Theorem]{dklm} as $\ell_2\pten \ell_2$ is a counterexample.

The aim of Section \ref{section:SSD} is to go further in showing that strong subdifferentiability is very restrictive in a projective tensor product. Indeed, we prove in Theorem \ref{theo:nececompact} that given two Banach spaces $X$ and $Y$ such that $X$ or $Y$ has the metric approximation property, if $X\pten Y$ is strongly subdifferentiable then every bounded operator from $X$ to $Y^*$ must be compact. This explain why $\ell_2\pten \ell_2$ is not strongly subdifferentiable and shows that this property must be seeked in a restrictive class of projective tensor product spaces. In the search of positive examples we look at the known result that $\ell_p\pten \ell_q$ is strongly subdifferentiable if $2<p,q<\infty$ \cite[Corollary 2.8]{dklm}. Indeed, this result relies on a nice one of S. J. Dilworth and D. Kutzarova \cite[Theorem 4]{dk}, which asserts that, for $2<p,q<\infty$, the space $(\ell_p\pten \ell_q)^*$ enjoys the \textit{$w^*$-Kadec-Klee property} (see the definition before Theorem \ref{theo:SSDnoseparable}). Our next aim in Theorem \ref{theo:SSDgenerallplq} is to extend the above mentioned result to arbitrary density characters by proving that for $2<p,q<\infty$ the space $(\ell_p(I)\pten \ell_q(J))^*$ has the $w^*$-Kadec-Klee property obtaining, as a consequence, that $\ell_p(I)\pten \ell_q(J)$ is strongly subdifferentiable. As a consequence of this result, we are able to prove that the pair $(\ell_p(I),\ell_q(J))$ has the $L_{p,p}$ for bilinear mappings, which improves \cite[Theorem 2.7 (a)]{dklm}.

In Section \ref{section:naprojtensor} we consider a quite recent concept of norm attainment related to nuclear operators (see \cite[Section 2.3]{djrr21}). According to \cite[Definition 2.1]{djrr21}, an element $z\in X\pten Y$ is said \textit{attain its projective norm} if there exists a sequence $(x_n)$ in $X$ and $(y_n)$ in $Y$ such that $\Vert u\Vert=\sum_{n=1}^\infty \Vert x_n\Vert \Vert  y_n\Vert$ and $u=\sum_{n=1}^\infty x_n\otimes y_n$. We denote $\NA_\pi(X\pten Y)$ the set of those $z$ which attains its nuclear norm.

In the sucessive papers \cite{dgjr22,djrr21} a lot of examples of Banach spaces $X$ and $Y$ are exhibited so that $\NA(X\pten Y)$ is dense in $X\pten Y$. It is also known that there are examples where $\NA_\pi(X\pten Y)\neq X\pten Y$, and even it is known that $\NA_\pi(X\pten Y)$ may fail to be dense (see \cite[Theorem 5.1]{djrr21}). A natural question in this line is whether $(X\pten Y)\setminus \NA_\pi(X\pten Y)$ may be dense. This can be compared with the study of when the non norm attaining linear functionals may be dense in a Banach space, a problem which has been considered in the literature (as a matter of example, let us point out that in \cite{acka} it is proved that every non-reflexive Banach space admits an equivalent renormig such that the set of non norm-attaining linear functionals is dense).

In Theorem \ref{theo:nonnormattainfincor} we prove that if $X$ is an infinite dimensional Banach space whose norm depends upond finitely many coordinates and $Y$ is an infinite dimensional Banach space then $\NA_\pi(X\pten Y)$ is contained in $X\otimes Y$. As a consequence, we get in Theorem \ref{theo:densiyintevacio} many examples of $X$ and $Y$ for which $\NA_\pi(X\pten Y)$ and its complement are dense.

\textbf{Terminology:} We will consider for simplicity real Banach spaces. We denote by $B_X$ and $S_X$ the closed unit ball and the unit sphere, respectively, of the Banach space $X$. We denote by $L(X, Y)$ the set of all bounded linear operators from $X$ into $Y$. If $Y = \mathbb R$, then $L(X, \mathbb R)$ is denoted by $X^*$, the topological dual space of $X$. We denote by $B(X \times Y)$ the Banach space of bounded bilinear mappings from $X \times Y$ into $\mathbb R$. It is well-known that the space $B(X \times Y)$ and $L(X, Y^*)$ are isometrically isomorphic as Banach spaces. We denote by $K(X, Y)$ the set of all compact operators and by $F(X, Y)$ the space of all operators of finite-rank from $X$ into $Y$.

The projective tensor product of $X$ and $Y$, denoted by $X \pten Y$, is the completion of the algebraic tensor product $X \otimes Y$ endowed with the norm
$$
\|z\|_{\pi} := \inf \left\{ \sum_{i=1}^n \|x_i\| \|y_i\|: z = \sum_{i=1}^n x_i \otimes y_i \right\},$$
where the infimum is taken over all such representations of $z$. The reason for taking completion is that $X\otimes Y$ endowed with the projective norm is complete if, and only if, either $X$ or $Y$ is finite dimensional (see \cite[P.43, Exercises 2.4 and 2.5]{ryan}).

It is well-known that $\|x \otimes y\|_{\pi} = \|x\| \|y\|$ for every $x \in X$, $y \in Y$, and the closed unit ball of $X \pten Y$ is the closed convex hull of the set $B_X \otimes B_Y = \{ x \otimes y: x \in B_X, y \in B_Y \}$. Throughout the paper, we will make use of both formulas indistinctly, without any explicit reference.

Observe the action of an operator $G: X \longrightarrow Y^*$ as a linear functional on $X \pten Y$ is given by
$$
G \left( \sum_{n=1}^{k} x_n \otimes y_n \right) = \sum_{n=1}^{k} G(x_n)(y_n),$$
for every $\sum_{n=1}^{k} x_n \otimes y_n \in X \otimes Y$. This action establishes a linear isometry from $L(X,Y^*)$ onto $(X\pten Y)^*$ (see e.g. \cite[Theorem 2.9]{ryan}). All along this paper we will use the isometric identification $(X\pten Y)^*=L(X,Y^*)=B(X\times Y)$ without any explicit mention.

From the equality $B_{X\pten Y}=\overline{\co}(B_X\otimes B_Y)$ and by Banach-Dieudonn\'e theorem \cite[Theorem 4.44]{checos} it is not difficult to prove that a bounded net $T_s\in L(X,Y^*)$ converges in the $w^*$ topology of $L(X,Y^*)=(X\pten Y)^*$ to some $T\in L(X,Y^*)$ if, and only if, $T_s(x)(y)=T_s(x\otimes y)\rightarrow T(x\otimes y)=T(x)(y)$ for every $x\in B_X$ and $y\in B_Y$. 

Observe that, given two Banach spaces $X$ and $Y$, the Banach space $X$ can be seen as an isometric subspace of $X\pten Y$. Indeed, given $y_0\in S_Y$, the bounded operator 
$$\begin{array}{ccc}
X & \longrightarrow & X\pten Y\\
x & \longmapsto & x\otimes y_0
\end{array}$$
is an isometry. 

Observe also that given two bounded operators $T:X\longrightarrow Z$ and $S:Y\longrightarrow W$, we can define an operator $T\otimes S:X\pten Y\longrightarrow Z\pten W$ by the action $(T\otimes S)(x\otimes y):=T(x)\otimes S(y)$ for $x\in X$ and $y\in Y$. It follows that $\Vert T\otimes S\Vert=\Vert T\Vert\Vert S\Vert$.

As an easy consequence, if $Z\subseteq X$ is a $1$-complemented subspace, then $Z\pten Y$ is a $1$-complemented subspace of $X\pten Y$ in the natural way (see \cite[Proposition 2.4]{ryan} for details).

Recall that a Banach space $X$ has the \emph{metric approximation property} (MAP)
if there exists a net $(S_\alpha)$ in $F(X,X)$ with $\Vert S_\alpha\Vert\leq 1$ for every $\alpha$ and such that
$S_\alpha (x) \to x$ for all $x \in X$.

\section{On the strong subdifferentiability}\label{section:SSD}

Recall that the norm of a Banach space $X$ is said to be \textit{strongly subdifferentiable (SSD)} if, for every $x\in S_X$, the one-sided limit
$$\lim\limits_{t\rightarrow 0^+}\frac{\Vert x+th\Vert-\Vert x\Vert}{t}$$
exists uniformly for $h\in S_X$. Observe that the norm of a Banach space $X$ is SSD if, and only if, for every $\varepsilon>0$ and $x\in S_X$ there exists $\eta>0$ satisfying that if $y^*\in S_{X^*}$ verifies $y^*(x)>1-\eta$ then there exists $x^*\in S_{X^*}$ with $x^*(x)=1$ and $\Vert x^*-y^*\Vert<\varepsilon$. See \cite{fp,gmz} and references therein for examples and background on the topic.

Let $X,Y$ be two Banach spaces. Observe that in order to $X\pten Y$ being SSD then $X$ and $Y$ must be SSD because the property of being SSD is inherited by subspaces (it is clear and explicitly mentioned in \cite[Section 2]{fp}) and because $X$ and $Y$ are isometrically isomorphic to a subspace of $X\pten Y$. However, this necessary condition is far from being enough. Indeed, in \cite[Corollary 2.8]{dklm} it is observed that $\ell_p\pten \ell_q$ can not be SSD if $\frac{1}{p}+\frac{1}{q}\geq 1$ as $\ell_p\pten \ell_q$ contains an isometric copies of $\ell_1$ and, in particular, $\ell_p\pten \ell_q$ is not Asplund and consequently it can not be SSD \cite[Theorem 2 (i)]{gmz}.

The following result widely generalises the above mentioned result and exhibits a structural necessary condition for a projective tensor product to be SSD.

\begin{theorem}\label{theo:nececompact}
Let $X$ and $Y$ be two Banach spaces. Assume that either $X$ or $Y$ has the MAP. If $X\pten Y$ is SSD then every operator $T:X\longrightarrow Y^*$ is compact.

In particular, if $X$ and $Y$ are reflexive, if $X\pten Y$ is SDD then $X\pten Y$ is reflexive.
\end{theorem}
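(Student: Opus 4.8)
The plan is to deduce the statement from the Asplund property. Recall that an SSD Banach space is Asplund (\cite[Theorem~2(i)]{gmz}); hence if $X\pten Y$ is SSD, then $(X\pten Y)^{*}=L(X,Y^{*})$ has the Radon--Nikod\'ym property (RNP). Since $(X\pten Y)^{*}=L(X,Y^{*})=L(Y,X^{*})$ and an operator $X\to Y^{*}$ is compact precisely when its adjoint, equivalently the associated operator $Y\to X^{*}$, is compact (Schauder), we may assume without loss of generality that it is $Y$ that has the MAP. Thus everything reduces to the claim: \emph{if $Y$ has the MAP and $L(X,Y^{*})$ has the RNP, then $L(X,Y^{*})=K(X,Y^{*})$.}

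First I would record two consequences of the hypotheses. Fixing $x_{0}\in S_{X}$ and $x_{0}^{*}\in S_{X^{*}}$, the map $y^{*}\mapsto x_{0}^{*}(\cdot)\,y^{*}$ is an isometric embedding of $Y^{*}$ into $L(X,Y^{*})$, so $Y^{*}$ inherits the RNP; since $Y$ has the AP (being MAP) and $Y^{*}$ has the RNP, Grothendieck's theorem gives that $Y^{*}$ has the AP, whence $K(X,Y^{*})$ is the operator-norm closure of $F(X,Y^{*})$. Moreover, if $(S_{\alpha})\subseteq F(Y,Y)$ realises the MAP of $Y$, then $\mathrm{id}_{X}\otimes S_{\alpha}\to\mathrm{id}_{X\pten Y}$ in the strong operator topology, so for every $R\in L(X,Y^{*})$ its image $S_{\alpha}^{*}\!\circ R=(\mathrm{id}_{X}\otimes S_{\alpha})^{*}R$ is finite rank and converges to $R$ in the $w^{*}$ topology of $(X\pten Y)^{*}$; hence $F(X,Y^{*})$, and so $K(X,Y^{*})$, is $w^{*}$-dense in $L(X,Y^{*})$.

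Now suppose, towards a contradiction, that $K(X,Y^{*})\subsetneq L(X,Y^{*})$. Since $Y^{*}$ has the AP and $K(X,Y^{*})\ne L(X,Y^{*})$, a classical circle of results on spaces of operators (Feder, Kalton, Emmanuele), in the form ``if $K(X,Z)\ne L(X,Z)$ and $X^{*}$ or $Z$ has the AP, then $L(X,Z)$ contains an isomorphic copy of $c_{0}$'', produces a copy of $c_{0}$ inside $L(X,Y^{*})=(X\pten Y)^{*}$, contradicting its RNP. (Alternatively, one extracts from a non-compact $T$ a basic sequence $(x_{n})\subseteq B_{X}$ with $(Tx_{n})$ $\delta$-separated, picks $y_{n}\in S_{Y}$ almost norming $Tx_{n}$, and uses the $S_{\alpha}$ to build, for each $A$ in an almost disjoint family of subsets of $\mathbb N$ of size continuum, a uniformly bounded $S_{A}\colon X\to Y^{*}$ with $\langle S_{A}x_{n},y_{n}\rangle$ bounded below for $n\in A$ and small otherwise; restricting to $W:=\overline{\operatorname{span}}\{x_{n}\otimes y_{n}\}$ produces continuum many separated functionals on $W$, so the separable subspace $W$ of $X\pten Y$ has non-separable dual, again contradicting Asplundness.) Either way, every $T\colon X\to Y^{*}$ is compact.

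For the ``in particular'', assume in addition that $X$ and $Y$ are reflexive. Then $Y^{*}$ is reflexive, hence has the RNP, and --- applying Grothendieck's theorem to $Y^{*}$, whose dual $Y^{**}=Y$ has the AP --- also has the AP; by the first part, $(X\pten Y)^{*}=L(X,Y^{*})=K(X,Y^{*})=X^{*}\iten Y^{*}$. Since $Y^{**}=Y$ is reflexive (so has the RNP) and has the AP, the duality of injective tensor products (\cite[Ch.~5]{ryan}) yields $(X\pten Y)^{**}=(X^{*}\iten Y^{*})^{*}=X^{**}\pten Y^{**}=X\pten Y$, so $X\pten Y$ is reflexive. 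I expect the main difficulty to lie precisely in the third paragraph --- upgrading ``$K(X,Y^{*})\ne L(X,Y^{*})$, $Y^{*}$ has the AP'' to a failure of the RNP of $L(X,Y^{*})$ --- which is where the approximation property coming from the MAP hypothesis is genuinely needed, $X$ carrying no unconditional structure in general.
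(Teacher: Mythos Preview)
Your approach is genuinely different from the paper's, and considerably more elaborate. The paper's proof is three lines and does \emph{not} go through Asplundness or the RNP at all: it quotes a different fact from the same reference, namely \cite[Lemma~3]{gmz}, which says that if a Banach space $Z$ is SSD then $Z^{*}$ has no proper closed $1$-norming subspace. Under the MAP hypothesis on $X$ or $Y$, the closed subspace $K(X,Y^{*})\subseteq L(X,Y^{*})=(X\pten Y)^{*}$ is $1$-norming for $X\pten Y$ (this is elementary; the paper cites \cite[Proposition~2.3]{llr2}), so it must be all of $L(X,Y^{*})$. The reflexive case then follows immediately from the standard characterisation \cite[Theorem~4.21]{ryan}. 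This completely bypasses the third paragraph you correctly flagged as the hard one.

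As for your route, the difficulty you anticipate is real and, as written, a gap. The statement you attribute to ``Feder, Kalton, Emmanuele'' --- that $K(X,Z)\ne L(X,Z)$ together with the AP of $X^{*}$ or $Z$ forces a copy of $c_{0}$ in $L(X,Z)$ --- is not a theorem in that generality; the results in that circle typically need unconditional structure (an unconditional basis or FDD in one of the factors, or that $K(X,Z)$ be an $M$-ideal, etc.), none of which follows from the bare MAP. Your alternative almost-disjoint-family sketch is plausible in spirit but far from a proof: producing the uniformly bounded $S_{A}$ from the MAP net $(S_{\alpha})$ so that the values $\langle S_{A}x_{n},y_{n}\rangle$ separate the branches is exactly where unconditionality normally enters, and you have not indicated how to arrange it here. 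By contrast, the paper's $1$-norming argument uses the MAP only to see that finite-rank operators norm every basic tensor, which is immediate.
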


\begin{proof}
Since $X\pten Y$ is SSD, then $L(X,Y^*)=(X\pten Y)^*$ has no proper $1$-norming closed subspace \cite[Lemma 3]{gmz}. Since $X$ or $Y$ has the MAP we get that $K(X,Y^*)$ is $1$-norming for $X\pten Y$ (see e.g. \cite[Proposition 2.3]{llr2}) and it is closed. By the above, we derive that $L(X,Y^*)=K(X,Y^*)$, as desired.

The particular case of $X$ and $Y$ being reflexive follows from a well known characterisation of reflexivity of projective tensor product (see e.g. \cite[Theorem 4.21]{ryan}).
\end{proof}

This result recovers the fact that $\ell_p\pten \ell_q$ fails to be SSD when $\ell_p\pten \ell_q$ is not reflexive. Indeed, in this case we have the formal identity $i:\ell_p\longrightarrow \ell_{q^*}$ where the above theorem applies. However, we have more examples.

\begin{corollary}\label{cor:tensorxx*}
Let $X$ be an infinite dimensional Banach space with the MAP. For every infinite dimensional subspace $Y$ of $X$ we have that $Y\pten X^*$ is not SSD. In particular, $X\pten X^*$ is not SSD.
\end{corollary}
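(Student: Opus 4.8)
The plan is to derive the statement from Theorem~\ref{theo:nececompact}. Arguing by contradiction, assume that $Y\pten X^*$ is SSD. The one genuinely non-formal ingredient is an explicit non-compact operator into the relevant dual: composing the inclusion $Y\hookrightarrow X$ with the canonical embedding $X\hookrightarrow X^{**}$ gives an isometric, in particular bounded linear, operator $T\colon Y\longrightarrow X^{**}=(X^*)^*$. Since $Y$ is infinite dimensional its closed unit ball is not relatively norm compact, hence $T(B_Y)$ is not relatively compact and $T$ is not a compact operator.

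Now I would apply Theorem~\ref{theo:nececompact} to the projective tensor product $Y\pten X^*$: as it is assumed SSD, that theorem forces every operator from $Y$ into $(X^*)^*=X^{**}$ to be compact, contradicting the existence of $T$ and completing the proof. Specialising to $Y=X$ gives in particular that $X\pten X^*$ is not SSD. The assumption that $X$ has the MAP enters only at this point: it is what puts us in the scope of Theorem~\ref{theo:nececompact}, by guaranteeing --- through the argument behind \cite[Proposition~2.3]{llr2} --- that the compact operators form a $1$-norming closed subspace of $(Y\pten X^*)^*=L(Y,X^{**})$, so that, via \cite[Lemma~3]{gmz}, the SSD hypothesis would collapse this subspace onto the whole dual.

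The step I expect to be the main obstacle is exactly this last verification. The MAP is available on $X$, not directly on the factors $Y$ and $X^*$, so one cannot invoke \cite[Proposition~2.3]{llr2} off the shelf. When $Y=X$ there is nothing to do, since the left-hand factor $X$ itself carries the MAP; and if $X$ is reflexive then $X^*$ inherits the MAP and Theorem~\ref{theo:nececompact} applies verbatim to $Y\pten X^*$. In the remaining generality one has to re-run the proof of Theorem~\ref{theo:nececompact} by hand, feeding in a contractive net of finite-rank operators on $X$ that witnesses the MAP and using that $Y\subseteq X$ to produce, for each $z\in Y\otimes X^*$, a compact operator $Y\to X^{**}$ of norm at most one that almost norms $z$. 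Once this $1$-norming statement is in place, the non-compactness of $T$ and the reduction above are entirely routine.
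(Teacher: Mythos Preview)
Your approach coincides with the paper's: its one-line proof simply reads ``Taking the inclusion operator $i\colon Y\longrightarrow X$, which is not compact since $i$ is an isometry and $Y$ is infinite dimensional, Theorem~\ref{theo:nececompact} applies.'' Your operator $T$ is exactly this $i$ followed by the canonical embedding $X\hookrightarrow X^{**}$, which is how $i$ must be read once $(Y\pten X^*)^*$ is identified with $L(Y,X^{**})$.

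You are, however, more careful than the paper on the MAP hypothesis, and rightly so. Theorem~\ref{theo:nececompact} requires one of the two tensor factors---here $Y$ or $X^*$---to have the MAP, while the stated assumption is only on $X$. The paper does not comment on this; it just asserts that Theorem~\ref{theo:nececompact} applies. For $Y=X$ (the ``in particular'' clause) the hypothesis is met and both arguments are complete, and likewise when $X$ is reflexive as you note. For an arbitrary infinite-dimensional $Y\subseteq X$ the issue you raise is genuine, since MAP passes neither to subspaces nor to duals in general. Your proposed remedy---feed in a MAP net $(S_\alpha)$ on $X$ and use $Y\subseteq X$ to manufacture norm-one compact operators $Y\to X^{**}$ that $1$-norm $Y\pten X^*$---is the natural idea, but be aware that $S_\alpha x\to x$ in $X$ only forces $S_\alpha^*x^*\to x^*$ in the weak-$*$ topology of $X^*$, so neither pre-composing an arbitrary $T\in L(Y,X^{**})$ with restrictions of $S_\alpha$ nor post-composing with $S_\alpha^{**}$ obviously yields the pointwise-on-$Y\otimes X^*$ approximation needed for $1$-norming; this step would require more than the sketch you give.
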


\begin{proof} Taking the inclusion operator $i:Y\longrightarrow X$, which is not compact since $i$ is an isometry and $Y$ is infinite dimensional, Theorem \ref{theo:nececompact} applies.
\end{proof}

Notice that Theorem \ref{theo:nececompact} reveals that SSD on a projective tensor product $X\pten Y$ impose severe restrictions on the space $L(X,Y^*)$ under the MAP assumption, which explains the big absense of examples of SSD projective tensor product spaces. Let us notice, however, that in the case when $L(X,Y^*)=K(X,Y^*)$, still few examples are known to be SSD. The reason is that the characterisation of the SSD implies the necessity of dealing with perturbation of operators, which is difficult even for finite-rank operators. Because of that, in practice, the existing examples of SSD projective tensor product spaces have been obtained by indirect arguments.

To the best of the author knownledge, the only known results about SSD in projective tensor products are the following ones.

\begin{enumerate}
\item $\ell_1^N\pten X$ is SSD if, and only if, $X$ is SSD \cite[Theorem C]{djmr}. This result follows because in this case $\ell_1^N\pten X=\ell_1^N(X)$ isometrically and by the characterisation of SSD norms in $\ell_1$-sums of spaces given in \cite[Proposition 2.2]{fp}.
\item $\ell_p\pten \ell_q$ is SSD if $2<p,q<\infty$ \cite[Corollary 2.8 (a)]{dklm}. This result follows since $(\ell_p\pten \ell_q)^*$ has the $w^*$-Kadec-Klee property in this case \cite[Theorem 4]{dklm} and because, if $X$ is a reflexive Banach space such that $X^*$ has the $w^*$-Kadec-Klee property then the norm of $X$ is SSD (see the proof of \cite[Theorem 2.7]{dklm}).
\end{enumerate}

Our aim is to extend the above result to arbitrary $\ell_p(I)\pten \ell_q(J)$ for $2<p,q<\infty$. This will be done by proving that the dual has the $w^*$-sequential Kadec-Klee property, which will improve \cite[Theorem 4]{dk} to the non-separable case. In order to do so, let us introduce a bit of notation. Following \cite[Section 1]{dk}, we say that the dual of a Banach space $X$ has the \textit{$w^*$-Kadec-Klee property} if whenever $(x_n^*)$ is a sequence in $S_{X^*}$ satisfying that $x_n^*\rightarrow x^*\in S_{X^*}$ then $\Vert x_n^*-x^*\Vert\rightarrow 0$. See \cite{dk} for background and examples of spaces with the $w^*$-Kadec-Klee property.

Our interest in the $w^*$-Kadec-Klee property comes from (the proof of) \cite[Theorem 2.7]{dklm}, where it is proved that if $X$ is a Banach space such that $X^*$ has the $w^*$-Kadec-Klee property then the norm of $X$ is SSD.

\begin{theorem}\label{theo:SSDnoseparable}
Let $I, J$ be two infinite sets. Then $(\ell_p(I)\pten \ell_q(J))^*$ has the $w^*$-Kadec Klee property.
\end{theorem}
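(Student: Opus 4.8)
The goal is to show that for infinite sets $I,J$ and $2<p,q<\infty$, the dual $(\ell_p(I)\pten\ell_q(J))^*=L(\ell_p(I),\ell_q(J)^*)=L(\ell_p(I),\ell_{q'}(J))$ has the $w^*$-Kadec-Klee property. The plan is to reduce the non-separable statement to the separable case $(\ell_p\pten\ell_q)^*$ already established in \cite[Theorem 4]{dk}, using a separable-reduction/localisation argument: a $w^*$-convergent sequence of operators only "sees" countably many coordinates, so we can restrict to countable subsets $I_0\subseteq I$, $J_0\subseteq J$ and transfer the problem into $L(\ell_p(I_0),\ell_{q'}(J_0))=(\ell_p(I_0)\pten\ell_q(J_0))^*$, which is a separable dual to which Dilworth--Kutzarova applies.

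\begin{proof}
Write $r=q'$, so that $\ell_q(J)^*=\ell_r(J)$ with $1<r<2<q$, and identify $(\ell_p(I)\pten\ell_q(J))^*$ with $L(\ell_p(I),\ell_r(J))$ as in the excerpt. Let $(T_n)$ be a sequence in the unit sphere of $L(\ell_p(I),\ell_r(J))$ with $T_n\xrightarrow{w^*}T$ and $\|T\|=1$; we must show $\|T_n-T\|\to 0$. By the description of the $w^*$-topology recalled above, this means $\langle T_n(e_i),e_j\rangle\to\langle T(e_i),e_j\rangle$ for all $i\in I$, $j\in J$ (where $(e_i),(e_j)$ are the canonical unit vectors).

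First I would perform the localisation. For each $n$, the operator $T_n$ has separable range, hence is "supported" on a countable set of indices in $J$ (those $j$ for which $\langle T_n(e_i),e_j\rangle\neq 0$ for some $i$ in a countable norm-dense subset of $\ell_p(I)$), and similarly $T_n^*:\ell_{r'}(J)\to\ell_{p'}(I)$ pins down a countable set in $I$. Taking the union over all $n$ together with the analogous countable sets for $T$, we obtain countable subsets $I_0\subseteq I$ and $J_0\subseteq J$ such that every $T_n$ and $T$ factor through the canonical norm-one projection $P_{I_0}\otimes P_{J_0}$, i.e. $T_n=\iota_{J_0}\circ \widetilde{T_n}\circ P_{I_0}$ with $\widetilde{T_n}\in L(\ell_p(I_0),\ell_r(J_0))$, $\|\widetilde{T_n}\|=\|T_n\|$, and likewise for $T$. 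Here I use that $\ell_p(I_0)$ is $1$-complemented in $\ell_p(I)$ and $\ell_q(J_0)$ in $\ell_q(J)$, so $\ell_p(I_0)\pten\ell_q(J_0)$ is $1$-complemented in $\ell_p(I)\pten\ell_q(J)$ and the norms are preserved under both restriction and the inclusion back. Crucially, $\|T_n-T\|=\|\widetilde{T_n}-\widetilde T\|$, and $\widetilde{T_n}\xrightarrow{w^*}\widetilde T$ in $L(\ell_p(I_0),\ell_r(J_0))=(\ell_p(I_0)\pten\ell_q(J_0))^*$ (again checking convergence on the countably many generators $e_i\otimes e_j$, $i\in I_0$, $j\in J_0$).

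Now $I_0$ and $J_0$ are countable, so $\ell_p(I_0)$ and $\ell_q(J_0)$ are (isometrically) $\ell_p$ and $\ell_q$ — or finite-dimensional $\ell_p^N$, $\ell_q^M$, a case which is subsumed by the infinite one via a further $1$-complemented embedding, or handled trivially. Hence $\widetilde{T_n}$ is a sequence in the unit sphere of $(\ell_p\pten\ell_q)^*$ which $w^*$-converges to the unit-sphere element $\widetilde T$. By \cite[Theorem 4]{dk}, $(\ell_p\pten\ell_q)^*$ has the $w^*$-Kadec-Klee property, so $\|\widetilde{T_n}-\widetilde T\|\to 0$, and therefore $\|T_n-T\|\to 0$, as required.
\end{proof}

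\medskip

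The step I expect to be the main obstacle is the localisation: one must be careful that the countable index sets $I_0,J_0$ can be chosen \emph{simultaneously} for the whole sequence and the limit, and that restriction genuinely preserves operator norms in both directions (which it does, since $\ell_p(I_0)\hookrightarrow\ell_p(I)$ and $\ell_q(J_0)\hookrightarrow\ell_q(J)$ are isometric $1$-complemented inclusions, so $\|T_n\|=\|\widetilde{T_n}\|$ and, more delicately, $\|T_n-T\|=\|\widetilde{T_n}-\widetilde T\|$ because $T_n-T$ already lives in the image of the corresponding $1$-complemented copy of $L(\ell_p(I_0),\ell_r(J_0))$). Once the reduction is in place, the substance of the theorem is entirely carried by the separable Dilworth--Kutzarova result.
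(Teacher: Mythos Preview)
Your proposal is correct and follows essentially the same route as the paper: localise the sequence $(T_n)$ and its limit $T$ to countable index sets $I_0\subseteq I$, $J_0\subseteq J$, transfer everything to $(\ell_p(I_0)\pten\ell_q(J_0))^*\cong(\ell_p\pten\ell_q)^*$, and apply the separable case \cite[Theorem~4]{dk}. The only difference is tactical: the paper carries out the localisation by first replacing each $T_n$ by a nearby finite-rank operator (invoking $L(\ell_p(I),\ell_{q'}(J))=K(\ell_p(I),\ell_{q'}(J))$ from \cite{rosenthal} together with the MAP of $\ell_p(I)$) and then reading off the countable supports from the finite-rank representation, whereas you argue directly that $T_n$ and $T_n^{*}$ have separable range and hence countable coordinate support.

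One small point you should make explicit: the assertion that ``$T_n$ has separable range'' is not automatic for bounded operators between non-separable $\ell_p$-spaces (think of the identity on $\ell_2(I)$). It holds here precisely because every bounded operator $\ell_p(I)\to\ell_{q'}(J)$ is compact when $q'<2<p$ (Pitt/Rosenthal, \cite[Theorem~A2]{rosenthal}), which is the same ingredient the paper uses. Once that is cited, your factorisation $T_n=\iota_{J_0}\,\widetilde T_n\,P_{I_0}$ and the norm equalities $\Vert T_n\Vert=\Vert\widetilde T_n\Vert$, $\Vert T_n-T\Vert=\Vert\widetilde T_n-\widetilde T\Vert$ are correct, and arguably a little cleaner than the paper's ``without loss of generality'' passage to finite-rank approximants.
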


\begin{proof}
Take a sequence $T_n\in (\ell_p(I)\pten \ell_q(J))^*=L(\ell_p(I),\ell_{q^*}(J))$ such that $\Vert T_n\Vert=1$ for every $n$ and $T_n\rightarrow T\in S_{L(\ell_p(I),\ell_{q^*}(J))}$ in the $w^*$-topology of $(X\pten Y)^*$. Let us prove that $\Vert T_n-T\Vert\rightarrow 0$. 

To this end, we can assume with no loss of generality that $T_n$ is finite-rank for every $n\in\mathbb N$ because $L(\ell_p(I),\ell_{q^*}(J))=K(\ell_p(I),\ell_{q^*}(J))$ \cite[Theorem A2]{rosenthal} and then finite-rank operators are norm dense since $\ell_p(I)$ has the MAP and \cite[Proposition 4.12]{ryan} applies.

Hence we can write $T_n:=\sum_{k=1}^{p_n} x_{k,n}^*\otimes y_{k,n}^*$ for certain $p_n\in \mathbb N, x_{k,n}^*\in \ell_{p^*}(I)$ and $y_{k,n}^*\in \ell_{q^*}(J)$. Since every $x_{k,n}^*$ and $y_{k,n}^*$ have countable support we can find countable subsets $N\subseteq I$ and $M\subseteq J$ such that $\supp(x_{k,n}^*)\subseteq N$ and $\supp (y_{k,n}^*)\subseteq M$ holds for every $n\in\mathbb N$ and $k\in\{1,\ldots, p_n\}$. Set $i: \ell_p(N)\hookrightarrow \ell_p(I)$ and $j:\ell_q(M)\hookrightarrow \ell_q(J)$ the natural inclusion operators, and set $P:\ell_p(I)\longrightarrow \ell_p(N)$ and $Q:\ell_q(J)\longrightarrow \ell_q(M)$ the canonical (norm-one) projections. Given $n\in\mathbb N$ observe that, since $\supp(x_{k,n}^*)\subseteq N$ and $\supp(y_{k,n})\subseteq M$, we have that $T_n(x\otimes y)=T_n(i(P(x))\otimes j(Q(y)))$. Since $T_n\rightarrow T$ in the $w^*$ topology we conclude that $T(x\otimes y)=T(i(P(x))\otimes i(Q(y)))$ for every $x\in \ell_p(I)$ and $y\in \ell_q(J)$. Now define $G_n:=T_n\circ (i\otimes j)$ and $T:=T\circ (i\otimes j)$, which are elements of $(\ell_p(N)\pten \ell_q(M))^*$. 

We claim that $G_n\rightarrow G$ in the $w^*$ topology and that $\Vert G_n\Vert=\Vert G\Vert=1$ for every $n$. Let us prove first that $G$ is norm-one (the case of $G_n$ is similar). On the one hand, given $x\in S_{\ell_p(N)}, y\in S_{\ell_q(M)}$ we have
$$G(x\otimes y)=T(i(x)\otimes j(y))\leq \Vert T\Vert \Vert i(x)\otimes j(y)\Vert=1$$
since $\Vert T\Vert=1$ and $\Vert i\otimes j\Vert= \Vert i\Vert \Vert j\Vert=1$. For the reverse inequality take $\varepsilon>0$ and, since $\Vert T\Vert=1$, we can find $x\in S_{\ell_p(I)}$ and $y\in S_{\ell_q(J)}$ such that $T(x\otimes y)>1-\varepsilon$. Since $T(x\otimes y)=T(i(P(x))\otimes j(Q(y)))$ we have that
\[\begin{split}
1-\varepsilon<T(x\otimes y)=T(i(P(x))\otimes j(Q(y)))& =G(P(x)\otimes Q(y))\\
& \leq \Vert G\Vert \Vert P\otimes Q\Vert \Vert x\otimes y\Vert\\
& =\Vert G\Vert.
\end{split}\]
Since $\varepsilon>0$ was arbitrary we conclude that $\Vert G\Vert=1$. The same argument proves that $\Vert G_n\Vert=1$ holds for every $n\in\mathbb N$.

Now let us prove that $G_n\rightarrow G$ in the $w^*$-topology of $(\ell_p(N)\pten \ell_q(M))^*$. Since the sequence is bounded, we have that $G_n\rightarrow G$ weakly-star if, and only if, $G_n(x\otimes y)\rightarrow G(x\otimes y)$ for $x\in B_{\ell_p(N)}$ and $y\in B_{\ell_q(M)}$. Take arbitrary $x\in B_{\ell_p(N)}$ and $y\in B_{\ell_q(M)}$. Observe that
$$G_n(x\otimes y)=T_n(i(x)\otimes j(y))\rightarrow T(i(x)\otimes j(y))=G(x\otimes y)$$
where the above convergence follows since $T_n\rightarrow T$ in the $w^*$-topology of $(\ell_p(I)\pten \ell_q(J))^*$. This proves that $G_n\rightarrow G$ in the $w^*$-topology of $(\ell_p(N)\pten \ell_q(M))^*$, as desired.

Now we have that $\ell_p(N)\pten \ell_q(M)$ is isometrically isomorphic to $\ell_p\pten \ell_q$ since $N$ and $M$ are countable. Consequently, $(\ell_p(N)\pten \ell_q(M))^*$ has the $w^*$-Kadec-Klee property by \cite[Theorem 4]{dk}, so $\Vert G_n-G\Vert\rightarrow 0$. This implies that $\Vert T_n-T\Vert\rightarrow 0$. Indeed, given $n\in\mathbb N$ and $x\in B_{\ell_p(I)}, y\in B_{\ell_q(J)}$ we have that
\[\begin{split}
(T_n-T)(x\otimes y)& =(T_n-T)(i(P(x))\otimes j(Q(y)))\\
& =(T_n-T)(i\otimes j)(P(x)\otimes Q(y))\\
& =(G_n-G)(P(x)\otimes Q(y))\\
& \leq \Vert G_n-G\Vert \Vert P(x)\otimes Q(y)\Vert\\
& \leq \Vert G_n-G\Vert.
\end{split}
\]
Since $x,y$ were arbitrary we conclude that $\Vert T_n-T\Vert\leq \Vert G_n-G\Vert$, from where we conclude that $T_n\rightarrow T$ in the norm topology and the proof is finished.
\end{proof}

Now we aim to prove extend the above result when one of the sets is finite. This is immediate from the following result.

\begin{lemma}\label{lemma:inheritkk}
Let $X$ be a Banach space and $Y\subseteq X$ be a $1$-complemented subspace. If $X^*$ has the $w^*$-sequential Kadec-Klee property, then so does $Y$.
\end{lemma}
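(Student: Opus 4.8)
The plan is to transfer the property along the adjoint of a norm-one projection; the conclusion should be read as ``$Y^*$ has the $w^*$-Kadec-Klee property''. Since $Y$ is $1$-complemented in $X$, fix a linear projection $P\colon X\to X$ with $P(X)=Y$, $P\restricted Y=\mathrm{id}_Y$ and $\Vert P\Vert=1$, and regard it as a norm-one operator $P\colon X\to Y$. Its adjoint $P^*\colon Y^*\to X^*$ is $w^*$-to-$w^*$ continuous, as every adjoint operator is, and, crucially, it is an isometric embedding: for $y^*\in Y^*$ one has $\Vert P^*y^*\Vert\leq\Vert P\Vert\,\Vert y^*\Vert=\Vert y^*\Vert$, while testing only on elements of $B_Y$ and using $P\restricted Y=\mathrm{id}_Y$ gives $\Vert P^*y^*\Vert\geq\sup_{y\in B_Y}\abs{y^*(Py)}=\Vert y^*\Vert$.

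Now let $(y_n^*)$ be a sequence in $S_{Y^*}$ with $y_n^*\to y^*\in S_{Y^*}$ in the $w^*$ topology of $Y^*$; the goal is to show $\Vert y_n^*-y^*\Vert\to 0$. Consider the images $P^*y_n^*$ and $P^*y^*$ in $X^*$. By the previous paragraph $\Vert P^*y_n^*\Vert=\Vert P^*y^*\Vert=1$, so these points lie in $S_{X^*}$, and by $w^*$-to-$w^*$ continuity of $P^*$ we get $P^*y_n^*\to P^*y^*$ in the $w^*$ topology of $X^*$. Applying the $w^*$-Kadec-Klee property of $X^*$ yields $\Vert P^*y_n^*-P^*y^*\Vert\to 0$. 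Since $P^*$ is an isometry, $\Vert P^*y_n^*-P^*y^*\Vert=\Vert P^*(y_n^*-y^*)\Vert=\Vert y_n^*-y^*\Vert$, and therefore $\Vert y_n^*-y^*\Vert\to 0$, as desired.

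There is essentially no obstacle here: the only ingredients are that $1$-complementation produces a norm-one projection, that adjoints are automatically $w^*$-to-$w^*$ continuous, and that this particular adjoint is isometric; consequently both the $w^*$-convergence and the norms are preserved under $P^*$, and the Kadec-Klee conclusion passes directly from $X^*$ to $Y^*$. Equivalently, one could phrase the argument by observing that $P^*$ identifies $Y^*$, together with its $w^*$ topology and its norm, with the $w^*$-closed subspace $(\ker P)^\perp$ of $X^*$, and that the $w^*$-Kadec-Klee property is trivially inherited by $w^*$-closed subspaces of a dual space; but the direct computation above is the shortest route.
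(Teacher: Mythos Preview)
Your proof is correct and follows essentially the same route as the paper's: both transfer the sequence to $X^*$ via the adjoint $P^*$ of the norm-one projection, use $w^*$-to-$w^*$ continuity of $P^*$ together with the fact that $P^*$ preserves norms, apply the $w^*$-Kadec--Klee property in $X^*$, and pull the norm convergence back. The only cosmetic difference is that you package the norm preservation once and for all as ``$P^*$ is an isometric embedding'', while the paper verifies $\Vert P^*y^*\Vert=1$ and $\Vert P^*(y_n^*-y^*)\Vert=\Vert y_n^*-y^*\Vert$ by two separate direct computations.
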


\begin{proof}
Let $\{y_n^*\}$ be a sequence in $S_{Y^*}$ such that $\{y_n^*\}\rightarrow y^*\in S_{Y^*}$ in the $w^*$-topology. Let us prove that $\Vert y_n^*-y^*\Vert\rightarrow 0$. To this end, take $P:X\longrightarrow Y$ be a norm-one operator with $P(y)=y$ for every $y\in Y\subseteq X$. Since $P^*$ is $w^*-w^*$ continuous we derive that $P^*(y_n^*)\rightarrow P^*(y^*)$ in the $w^*$-topology of $X^*$. Moreover, we claim that $P^*(y_n^*), P^*(y^*)\in S_{X^*}$ for every $n\in\mathbb N$. Let us prove for instance that $\Vert P^*(y^*)\Vert=1$. To this end take $\varepsilon>0$ and take $y\in S_Y$ satisfying that $y^*(y)>1-\varepsilon$. Now we have
$$1-\varepsilon<y^*(y)=y^*(P(y))=P^*(y^*)(y)\leq \Vert P^*(y^*)\Vert.$$
Since $\varepsilon>0$ was arbitrary we conclude that $P^*(y^*)\in S_{Y^*}$.

Since $X^*$ has the $w^*$ sequential Kadec-Klee property we get that $\Vert P^*(y_n^*)-P^*(y^*)\Vert\rightarrow 0$. But, given $n\in\mathbb N$, we have
$$\Vert P^*(y_n^*)-P^*(y^*)\Vert=\sup_{x\in B_X}\Vert (y_n^*-y^*)(P(x))\Vert=\sup_{y\in B_Y}\Vert (y_n^*-y^*)(y)\Vert=\Vert y_n^*-y^*\Vert,$$
from where $\Vert y_n^*-y^*\Vert\rightarrow 0$ and the result follows.
\end{proof}

Now we are ready to give the following result.

\begin{theorem}\label{theo:SSDgenerallplq}
Let $2<p,q<\infty$. For every pair of non-empty sets $I$ and $J$ the space $(\ell_p(I)\pten \ell_q(J))^*$ has the $w^*$-sequential Kadec-Klee property. In particular, $\ell_p(I)\pten \ell_q(J)$ is SSD.
\end{theorem}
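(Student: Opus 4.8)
The plan is to reduce the statement to the already-established case of Theorem~\ref{theo:SSDnoseparable} (both index sets infinite) by a routine complementation argument, and then to extract strong subdifferentiability from reflexivity together with the dual Kadec-Klee property.

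First I would enlarge the index sets: pick infinite sets $I'\supseteq I$ and $J'\supseteq J$ (take $I'=I$ when $I$ is already infinite, and $I'=I\cup\mathbb N$ otherwise, and similarly for $J'$). Restricting a function to the coordinates lying in $I$, respectively $J$, gives norm-one projections witnessing that $\ell_p(I)$ is a $1$-complemented subspace of $\ell_p(I')$ and that $\ell_q(J)$ is a $1$-complemented subspace of $\ell_q(J')$. Using the observation recalled in the introduction --- that $Z\pten W$ is $1$-complemented in $X\pten W$ whenever $Z\subseteq X$ is $1$-complemented, together with the symmetric statement for the second factor, cf.\ \cite[Proposition~2.4]{ryan} --- one concludes that $\ell_p(I)\pten\ell_q(J)$ is a $1$-complemented subspace of $\ell_p(I')\pten\ell_q(J')$; concretely, if $\iota_1,\iota_2$ denote the inclusions and $\rho_1,\rho_2$ the projections, then $\iota_1\otimes\iota_2$ is an isometric embedding (it has norm $\|\iota_1\|\|\iota_2\|=1$, and the norm-one operator $\rho_1\otimes\rho_2$ is a left inverse) whose range is the image of the norm-one projection $(\iota_1\otimes\iota_2)\circ(\rho_1\otimes\rho_2)$.

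Since $I'$ and $J'$ are infinite, Theorem~\ref{theo:SSDnoseparable} applies and $(\ell_p(I')\pten\ell_q(J'))^*$ has the $w^*$-Kadec-Klee property, which --- given that the definition used in this paper is phrased in terms of sequences --- is precisely the $w^*$-sequential Kadec-Klee property. Then Lemma~\ref{lemma:inheritkk}, applied to the $1$-complemented subspace constructed above, yields that $(\ell_p(I)\pten\ell_q(J))^*$ has the $w^*$-sequential Kadec-Klee property, which is the first assertion.

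For the ``in particular'' I would argue as follows. By the proof of \cite[Theorem~2.7]{dklm}, a reflexive Banach space whose dual has the $w^*$-Kadec-Klee property has SSD norm, so it is enough to see that $\ell_p(I)\pten\ell_q(J)$ is reflexive; by the $1$-complementation above this reduces to the reflexivity of $\ell_p(I')\pten\ell_q(J')$, which holds because $\ell_p(I')$ and $\ell_q(J')$ are reflexive, $\ell_p(I')$ has the MAP, and $q^*<2<p$ (as $q>2$), so that Pitt's theorem in its non-separable form \cite[Theorem~A2]{rosenthal} gives $L(\ell_p(I'),\ell_{q^*}(J'))=K(\ell_p(I'),\ell_{q^*}(J'))$ and \cite[Theorem~4.21]{ryan} applies. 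I do not expect a genuine obstacle: the whole argument is an assembly of earlier results, and the only points that demand a little care are that the $1$-complementation must be carried out in each tensor factor separately, and the (harmless) observation that the several Kadec-Klee properties involved (in Theorem~\ref{theo:SSDnoseparable}, Lemma~\ref{lemma:inheritkk} and \cite[Theorem~2.7]{dklm}) are all the sequential $w^*$-version.
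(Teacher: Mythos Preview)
Your proof is correct and follows essentially the same approach as the paper's: reduce to Theorem~\ref{theo:SSDnoseparable} by realising $\ell_p(I)\pten\ell_q(J)$ as a $1$-complemented subspace of a tensor product with infinite index sets and then invoke Lemma~\ref{lemma:inheritkk}, and deduce SSD from reflexivity plus the dual $w^*$-Kadec-Klee property via \cite[Theorem~2.7]{dklm}. The only cosmetic difference is that you enlarge both index sets simultaneously and thereby avoid the paper's case split (both finite / both infinite / exactly one finite); your deduction of reflexivity via $1$-complementation in the larger space is equally valid, though the paper checks it directly for $\ell_p(I)\pten\ell_q(J)$.
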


\begin{proof} Observe that the case that $I$ and $J$ are finite is trivial since $\ell_p(I)\pten \ell_q(J)$ is finite dimensional in this case. Moreover, the cases when $I$ and $J$ are infinite is proved in Theorem \ref{theo:SSDnoseparable}. In order to prove the result, it is enough to consider the case that just one of them is finite, so assume with no loss of generality that $I$ is finite. We can assume with no loss of generality that $I=\{1,\ldots, n\}\subseteq \mathbb N$ where $n=\operatorname{dim}(\ell_p(I))$.

Observe that $\ell_p(I)$ is a norm-one complemented subspace of $\ell_p$. In particular, $\ell_p(I)\pten \ell_q(J)$ is a norm-one complemented subspace of $\ell_p\pten \ell_q(I)$. Since the latter space has the $w^*$-Kadec-Klee property by Theorem \ref{theo:SSDnoseparable}, the result follows by Lemma \ref{lemma:inheritkk}.

Observe that the consequence on the SSD follows since $\ell_p(I)\pten \ell_q(J)$ is reflexive since every bounded operator $\ell_p(I)\longrightarrow \ell_q(J)^*$ is compact \cite[Theorem A2]{rosenthal} and by \cite[Theorem 4.21]{ryan}.
\end{proof}

Following the notation of \cite[Definition 2.1]{dklm}, given two Banach spaces $X,Y$, we say that the pair $(X,Y)$ has the \textit{$L_{p,p}$ for bilinear mappings} if, given $\varepsilon>0$ and $(x,y)\in S_X\times S_Y$, there exists $\eta>0$ (which depends on $\varepsilon$ and on the pair $(x,y)$) satisfying that if a bilinear mapping $B:X\times Y\longrightarrow \mathbb R$  with $\Vert B\Vert=1$ satisfies $B(x,y)>1-\eta$ then there exists another bilinear mapping $G:X\times Y\longrightarrow \mathbb R$ with $\Vert G\Vert=1$, $G(x,y)=1$ and $\Vert B-G\Vert<\varepsilon$.

It is clear, and explicitly proved in \cite[Proposition 4.2]{djmr}, that if $X\pten Y$ is SSD then the pair $(X,Y)$ has the $L_{p,p}$ for bilinear mappings.

As an immediate application of Theorem \ref{theo:SSDgenerallplq} we obtain the following corollary, which improves \cite[Theorem 2.7 (a)]{dklm}.

\begin{corollary}
Let $2<p,q<\infty$ and $I,J$ be two non-empty sets. Then the pair $(\ell_p(I),\ell_q(J))$ has the $L_{p,p}$ for bilinear mappings.
\end{corollary}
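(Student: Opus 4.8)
The plan is to obtain this as a direct consequence of Theorem~\ref{theo:SSDgenerallplq} together with the implication ``$X\pten Y$ SSD $\Rightarrow$ the pair $(X,Y)$ has the $L_{p,p}$ for bilinear mappings'' recorded in \cite[Proposition 4.2]{djmr} and quoted just above the statement. Concretely, I would proceed as follows.

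First, apply Theorem~\ref{theo:SSDgenerallplq} to the fixed $2<p,q<\infty$ and the given non-empty sets $I,J$ to conclude that $Z:=\ell_p(I)\pten\ell_q(J)$ is SSD. Next, unwind the characterisation of SSD norms stated at the beginning of Section~\ref{section:SSD}: for every $\varepsilon>0$ and $z\in S_Z$ there is $\eta>0$ such that every $z^*\in S_{Z^*}$ with $z^*(z)>1-\eta$ admits some $w^*\in S_{Z^*}$ with $w^*(z)=1$ and $\Vert z^*-w^*\Vert<\varepsilon$. I would then specialise this to $z=x\otimes y$ with $(x,y)\in S_{\ell_p(I)}\times S_{\ell_q(J)}$, using that $\Vert x\otimes y\Vert_\pi=\Vert x\Vert\Vert y\Vert=1$, so $z\in S_Z$ and Theorem~\ref{theo:SSDgenerallplq} furnishes an $\eta>0$ depending only on $\varepsilon$ and on the point $x\otimes y$, hence only on $\varepsilon$ and the pair $(x,y)$.

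Finally I would transport everything through the isometric identification $(X\pten Y)^*=B(X\times Y)$: a norm-one bilinear map $B$ corresponds to a functional $z^*\in S_{Z^*}$ with $z^*(x\otimes y)=B(x,y)$, so the hypothesis $B(x,y)>1-\eta$ reads exactly as $z^*(z)>1-\eta$; the functional $w^*$ provided by SSD corresponds to a norm-one bilinear map $G$ with $G(x,y)=w^*(x\otimes y)=1$, and since the identification is isometric one has $\Vert B-G\Vert=\Vert z^*-w^*\Vert<\varepsilon$. This is precisely the $L_{p,p}$ property for the pair $(\ell_p(I),\ell_q(J))$.

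There is no genuine obstacle here: the entire mathematical content sits in Theorem~\ref{theo:SSDgenerallplq}, and the passage from SSD of $X\pten Y$ to $L_{p,p}$ of $(X,Y)$ is the routine (and already cited) observation that $\Vert x\otimes y\Vert_\pi=1$ for unit vectors $x,y$ and that $(X\pten Y)^*=B(X\times Y)$ is an \emph{isometric}, not merely isomorphic, identification, so that all perturbation constants pass through unchanged. Accordingly I would present the corollary simply as an immediate application of Theorem~\ref{theo:SSDgenerallplq}, noting explicitly — since this is exactly the improvement over \cite[Theorem 2.7 (a)]{dklm} — that the conclusion now holds for arbitrary index sets $I,J$ because Theorem~\ref{theo:SSDgenerallplq} does.
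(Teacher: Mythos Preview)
Your proposal is correct and matches the paper's approach exactly: the corollary is stated as an immediate application of Theorem~\ref{theo:SSDgenerallplq} via the implication ``$X\pten Y$ SSD $\Rightarrow$ $(X,Y)$ has the $L_{p,p}$'' from \cite[Proposition 4.2]{djmr}. You have simply spelled out the routine identification $(X\pten Y)^*=B(X\times Y)$ in more detail than the paper does.
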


\section{Tensors which do not attain its norm}\label{section:naprojtensor}

One consequence of the isometric identification $\ell_1(I)\pten X=\ell_1(I,X)$ is \cite[Proposition 2.8]{ryan}, which establishes that, given two Banach spaces $X$ and $Y$, then for every $z\in X\pten Y$ and every $\varepsilon>0$, there exist sequences $(x_n)$ in $X$ and $(y_n)$ in $Y$ with $u=\sum_{n=1}^\infty x_n\otimes y_n$ (where the above convergence is in the norm topology of $X\pten Y$) and such that $\Vert z\Vert\leq \sum_{n=1}^\infty \Vert x_n\Vert\Vert y_n\Vert\leq \Vert z\Vert+\varepsilon$. Consequently, it follows that
$$\Vert z\Vert=\inf\left\{\sum_{n=1}^\infty \Vert x_n\Vert\Vert y_n\Vert: \sum_{n=1}^\infty \Vert x_n\Vert\Vert y_n\Vert<\infty,  u=\sum_{n=1}^\infty x_n\otimes y_n \right\}$$
where the infimum is taken over all the possible representations of $u$ as limit of a series in the above form.

According to \cite[Definition 2.1]{djrr21}, an element $z\in X\pten Y$ is said \textit{attain its projective norm} if the above infimum is actually a minimum, that is, if there exists a sequence $(x_n)$ in $X$ and $(y_n)$ in $Y$ such that $\Vert u\Vert=\sum_{n=1}^\infty \Vert x_n\Vert \Vert  y_n\Vert$ and $u=\sum_{n=1}^\infty x_n\otimes y_n$. We denote $\NA_\pi(X\pten Y)$ the set of those $z$ which attains its nuclear norm.

In the papers \cite{dgjr22,djrr21} an intensive study of the structure of $\NA_\pi(X\pten Y)$ is done in connection of how big can this set be. For instance, it is known that $\NA_\pi(X\pten Y)$ is (norm) dense in $X\pten Y$ if $X$ and $Y$ are dual spaces with the Radon-Nikodym property and one of them has the approximation property \cite[Theorem 4.6]{dgjr22} or in the classical Banach spaces \cite[Example 4.12]{djrr21}, but there are examples of Banach spaces $X$ and $Y$ where $\NA_\pi(X\pten Y)$ is not dense \cite[Theorem 5.1]{djrr21}. It is also known that there are examples of $X$ and $Y$ where $\NA_\pi(X\pten Y)=X\pten Y$ like $X,Y$ finite dimensional, $X=\ell_1(I)$ and $Y$ any Banach space, $X$ a finite dimensional polyhedral and $Y$ any dual Banach space or $X=Y$ being a complex Hilbert space (see \cite[Propositions 3.5, 3.6 and 3.8]{djrr21} and \cite[Theorem 4.1]{dgjr22}).

In general, few is known about when a particular element $z\in X\pten Y$ does (or does not) attain its nuclear norm, and a manifestation of this is that, in all the examples $X\pten Y$ where there exists an element $z$ not attaining its projective norm, no explicit description of such $z$ is given and the conclusion is obtained by an indirect argument like an argument of non-density of norm-attaining bilinear mapping \cite[Example 3.12 (b), (c) and (d)]{djrr21} or the existence of a bilinear form which attains its norm as a functional on $X\pten Y$ but which does not attains its norm as a bilinear mapping \cite[Example 3.12 (a)]{djrr21}.

In the following we will exhibit examples of tensor product spaces for which the norm attaining tensors are finite linear combination of basic tensors. For the establishment of the theorem we need a bit of notation. Recall that the norm of a Banach space $X$ is said to \textit{locally depend upon finitely many coordinates} if for every $x\in X\setminus \{0\}$, there exists $\varepsilon>0$, a subset $\{f_1,\ldots, f_N\}\subseteq X^*$ and a continuous function $\varphi:\mathbb R^N\longrightarrow \mathbb R$ satisfying that $\Vert y\Vert=\varphi(f_1(y),\ldots, f_N(y))$ for $y\in X$ with $\Vert y-x\Vert<\varepsilon$.

Clearly, this property is inherited by closed subspaces. We refer to \cite{godefroy,hz} and references therein for background. For instance, closed subspaces of $c_0$ have
this property [17, Proposition III.3]. Conversely, every infinite dimensional Banach space whose norm locally depends upon finitely many coordinates contains an isomorphic copy of
$c_0$ [17, Corollary IV.5].

Now we are ready to present the following theorem.

\begin{theorem}\label{theo:nonnormattainfincor}
Let $X$ be an infinite dimensional Banach space whose norm depends upon finitely many coordinates and let $Y$ be an infinite dimensional Hilbert space. Then
$$\NA_\pi(X\pten Y)\subseteq X\otimes Y.$$
In particular, there are tensors in $X\pten Y$ which do not attain its projective norm.
\end{theorem}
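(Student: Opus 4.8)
The plan is to start from an optimal series representation of a tensor that attains its projective norm, pair it with a norming functional, and then exploit the tension between the \emph{flatness} of the norm of $X$ (local dependence on finitely many coordinates) and the strict convexity of $Y^*$.

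First I would fix $z\in\NA_\pi(X\pten Y)$, which we may assume nonzero, together with a representation $z=\sum_n x_n\otimes y_n$ with $\sum_n\Vert x_n\Vert\Vert y_n\Vert=\Vert z\Vert$. Discarding vanishing terms and replacing $x_n\otimes y_n$ by $(\Vert y_n\Vert x_n)\otimes(y_n/\Vert y_n\Vert)$, I may further assume $x_n\neq 0$ and $\Vert y_n\Vert=1$, so $\sum_n\Vert x_n\Vert=\Vert z\Vert$. Picking $\phi\in S_{(X\pten Y)^*}=S_{L(X,Y^*)}$ with $\phi(z)=\Vert z\Vert$ and expanding $\phi(z)=\sum_n\langle\phi(x_n),y_n\rangle$, a term-by-term comparison with $\sum_n\Vert x_n\Vert$ (using $\langle\phi(x_n),y_n\rangle\le\Vert\phi(x_n)\Vert\le\Vert x_n\Vert$) forces every inequality to be an equality, so $\langle\phi(x_n),y_n\rangle=\Vert\phi(x_n)\Vert=\Vert x_n\Vert$ for all $n$; in particular $\phi$ attains its operator norm at each $u_n:=x_n/\Vert x_n\Vert\in S_X$.

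The heart of the argument, and what I expect to be the main obstacle, is to upgrade this to the statement that \emph{$\phi$ has finite rank}. For this I would apply the local-dependence hypothesis at $u_1$, obtaining $\varepsilon>0$ and $f_1,\dots,f_N\in X^*$ such that the norm of $X$ is constant on the slice $\{u_1+v:\ v\in Z,\ \Vert v\Vert<\varepsilon\}$, where $Z:=\bigcap_{i=1}^N\ker f_i$ is of finite codimension. Writing $\psi:=\phi(u_1)\in S_{Y^*}$, for $v\in Z$ small one gets $\Vert\psi\pm\phi(v)\Vert=\Vert\phi(u_1\pm v)\Vert\le\Vert u_1\pm v\Vert=1$, so $\psi$ is the midpoint of the two elements $\psi\pm\phi(v)$ of $B_{Y^*}$; since $\Vert\psi\Vert=1$, strict convexity of $Y^*$ (it is a Hilbert space) forces $\phi(v)=0$, and after scaling $Z\subseteq\ker\phi$. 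Hence $R':=\phi(X)$ is finite dimensional. Getting the flat slice of $S_X$ to interact cleanly with the rotundity of $Y^*$ is the delicate point, and this is the only step that genuinely combines both hypotheses.

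To finish, I would use the Riesz isometry $J\colon Y\to Y^*$: the alignment $\langle\phi(x_n),y_n\rangle=\Vert\phi(x_n)\Vert\,\Vert y_n\Vert$ together with the equality case of Cauchy--Schwarz gives $\phi(x_n)=\Vert x_n\Vert\,J(y_n)$, so every $J(y_n)$ lies in the finite-dimensional space $R'$; hence all $y_n$ lie in the fixed finite-dimensional subspace $R:=J^{-1}(R')$ of $Y$. Expanding the $y_n$ in a basis $e_1,\dots,e_r$ of $R$ and using that $\sum_n\Vert x_n\otimes y_n\Vert_\pi=\sum_n\Vert x_n\Vert<\infty$, so the series converges absolutely in $X\pten Y$ and may be regrouped, I arrive at $z=\sum_{j=1}^{r}x^{(j)}\otimes e_j\in X\otimes Y$, which is the claimed inclusion $\NA_\pi(X\pten Y)\subseteq X\otimes Y$. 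The ``in particular'' is then immediate: since $X$ and $Y$ are infinite dimensional, $X\otimes Y$ is a proper (dense) subspace of $X\pten Y$, so any tensor outside $X\otimes Y$ fails to attain its projective norm. Finally I would point out that Hilbertness of $Y$ (rather than mere reflexivity and rotundity) is used twice: once for the strict convexity of $Y^*$ above, and, more essentially, for the \emph{linearity} of $J$, which is precisely what confines the $y_n$ to a finite-dimensional subspace.
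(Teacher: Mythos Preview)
Your proof is correct and follows essentially the same route as the paper's: pick a norming functional $T\in S_{L(X,Y^*)}$, use a convexity argument to get $T(x_n)(y_n)=\Vert x_n\Vert$, show $T$ has finite rank, deduce via the Riesz map that all $y_n$ live in the finite-dimensional subspace $T(X)\subseteq Y$, and regroup the series. The only difference is that where the paper cites \cite[Lemma~2.8]{martin16} for the finite-rank step, you reprove that lemma directly (local dependence on finitely many coordinates gives a flat finite-codimensional slice through $u_1$, and strict convexity of $Y^*$ kills $\phi$ on it); your argument is exactly Mart\'in's, so the proofs coincide with yours being more self-contained.
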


\begin{proof}
Take $z\in \NA_\pi(X\pten Y)$ with $\Vert z\Vert=1$, and let us prove that we can write $z$ as a finite sum of basic tensors. To this end, since $z\in \NA_\pi(X\pten Y)$ we can write $z=\sum_{n=1}^\infty \lambda_n x_n\otimes y_n$ for suitable $x_n\in S_X, y_n\in S_Y$ and $\lambda_n\in ]0,1]$ for every $n\in\mathbb N$ with $\sum_{n=1}^\infty \lambda_n=1$. Take $T\in S_{L(X,Y^*)}$ with $T(z)=1$. A convexity argument implies that $T(x_n)(y_n)=1$ for every $n\in\mathbb N$. Observe that, since $Y$ is a Hilbert space we have, under the natural identification $Y=Y^*$, that $T(x_n)(y_n)=1$ implies $y_n=T(x_n)\in T(X)$. On the other hand, since $Y^*$ is strictly convex and $T$ attains its norm we conclude that $T$ has finite rank \cite[Lemma 2.8]{martin16}. This implies that $y_n$ lives in the finite dimensional subspace $T(X)$ of $Y$. Let us conclude form here the desired result.

Take an orthonormal basis $\{v_1,\ldots, v_q\}$ of the Hilbert space $T(X)$. Since $y_n\in T(X)$ we can write $y_n:=\sum_{i=1}^q \alpha_i^n v_i$ with $\Vert y_n\Vert^2=1=\sum_{i=1}^n (\alpha_i^n)^2$. By H\"older inequality we conclude that
$$\sum_{i=1}^q \vert\alpha_i^n\vert\leq \sqrt{q} \left(\sum_{i=1}^q (\alpha_i^n)^2\right)^\frac{1}{2}=\sqrt{q}$$
holds for every $n\in\mathbb N$. Now, given $k\in\mathbb N$, we have
$$\sum_{n=1}^k \lambda_n x_n\otimes y_n=\sum_{n=1}^k \lambda_n x_n\otimes \left(\sum_{i=1}^q\alpha_i^n v_i\right)=\sum_{i=1}^q \left(\sum_{n=1}^k \alpha_i^n \lambda_n x_n  \right)\otimes v_i.$$
Observe that the above sequence in $k\in\mathbb N$ converges to $z$ when $k\rightarrow \infty$. On the other hand, given $1\leq i\leq q$ we have that the sequence $\sum_{n=1}^k \alpha_i^n \lambda_n x_n$ in $k$ converges in norm to some element of $X$. In order to show this it is enough, by the completeness of $X$, to prove that the series is absolutely convergent. But this is immediate since, given $k\in\mathbb N$, we have
$$\sum_{n=1}^k \left\Vert \alpha_i^n \lambda_n x_n\right\Vert=\sum_{n=1}^k \vert \alpha_i^n\vert \lambda_n \Vert x_n\Vert=\sum_{n=1}^k \vert \alpha_i^n\vert \lambda_n\leq \sqrt{q}\sum_{n=1}^k \lambda_n\leq  \sqrt{q}\sum_{n=1}^\infty \lambda_n =\sqrt{q}.$$
To shorten, let us write $a_i^k:=\sum_{n=1}^k \alpha_i^n \lambda_n x_n$. We know that $a_i^k\rightarrow a_i$ in norm for some $a_i\in X$. It is immediate that $a_i^k\otimes v_i\rightarrow a_i\otimes v_i$ in the norm topology of $X\pten Y$. By linearity of the limit we have that $\sum_{i=1}^q a_i^k\otimes v_i\rightarrow \sum_{i=1}^q a_i\otimes v_i$. However, the above sequence converges to $z$. By the uniqueness of limit we conclude 
$$z=\sum_{i=1}^q a_i\otimes v_i,$$
which proves that $z\in X\otimes Y$, as desired.

To conclude the last part, observe that the projective norm is not complete on $X\otimes Y$ since $X$ and $Y$ are infinite dimensional. Consequently, there exists $z\in X\pten Y\setminus X\otimes Y$. By the above, $z$ can not attain its projective norm.
\end{proof}

\begin{remark}\label{remark:densicomplement}\begin{enumerate}

\item In the hypothesis of Theorem \ref{theo:nonnormattainfincor} it is easy to construct elements which do not attain its projective norm. For instance, take $\{e_n\}\subseteq Y$ an infinite orthonormal set and take $\{x_n\}\subseteq S_X$ being linearly independent. Then $z:=\sum_{n=1}^\infty \frac{1}{2^n}x_n\otimes e_n$ does not attain its projective norm because it can not be written as a finite sum of basic tensors. Indeed, observe that since $Y$ is a Hilbert space, we have that $X\pten Y$ is precisely the space of nuclear operators $N(Y,X)$ (see \cite[Corollary 4.8]{ryan} for details). If we see $z$ an operator $T_z: Y\longrightarrow X$ by $T(y):=\sum_{n=1}^\infty \langle e_n,y\rangle x_n$, we have that $T$ is not a finite rank operator since $T(X)$ contains $T(e_n)=\frac{1}{2^n} x_n$, so $\{x_n: n\in\mathbb N\}\subseteq T(X)$, which implies that $T(X)$ is infinite dimensional. 

To the best of our knownledge, this is the first explicit description of an element in a projective tensor product which does not attain its projective norm.

\item In \cite[Theorem 4.2]{dgjr22} it is prove that if $X$ is a finite dimensional polyhedral space and $Y$ is a dual space, then $\NA_\pi(X\pten Y)=X\pten Y$. Observe that this result is false if $X$ is infinite dimensional, as $c_0\pten \ell_2$ shows by Theorem \ref{theo:nonnormattainfincor} and by the fact that there are elements in $c_0\pten \ell_2\setminus c_0\otimes \ell_2$.

\item Observe that given an infinite dimensional Banach space $X$ whose norm depends upon finitely many coordinates and given an infinite dimensional Hilbert space $Y$, Theorem \ref{theo:nonnormattainfincor} reveals that $X\pten Y\setminus \NA_\pi(X\pten Y)$ is non-empty. However, it follows that the set of non-norm attaining elements is even dense in $X\pten Y$. 

In order to prove it, take $z\in X\pten Y$, and let us approximate it by non-norm attaining elements. If $z\in X\pten Y\setminus X\otimes Y$ (i.e. if $z$ can not be written as finite sums of basic tensors) then $z\notin \NA_\pi(X\pten Y)$ and there is nothing to prove.

Otherwise, if $z\in X\otimes Y$, we can select $v\in X\pten Y\setminus X\otimes Y$ since $X$ and $Y$ are infinite dimensional Banach space. Then $z+\frac{1}{n}v\rightarrow z$. Moreover, $z+\frac{1}{n}v$ does not attain its norm for every $n$ because $z+\frac{1}{n}v\notin X\otimes Y$ (otherwise $v=n(z+\frac{1}{n}v-z)\in X\otimes Y$ since $X\otimes Y$ is a vector space).

Since $z$ was arbitrary we conclude that $(X\pten Y)\setminus \NA_\pi(X\pten Y)$ is dense.\end{enumerate}\end{remark}

The above point (3) in Remark \ref{remark:densicomplement} together with \cite[Theorem 4.8]{djrr21} allows us to obtain a number of spaces $X\pten Y$ where both $\NA_\pi(X\pten Y)$ and $X\pten Y\setminus \NA_\pi(X\pten Y)$ are dense. Before that, let us introduce the following notation from \cite{cas}: A Banach space $X$ is said to have the {\it metric $\pi$-property} if given $\varepsilon>0$ and $\{x_1,\ldots, x_n\}\subseteq S_X$ a finite collection in the sphere, then we can find a finite dimensional 1-complemented subspace $M\subseteq X$ such that for each $i \in \{1, \ldots, n \}$ there exists $x_i'\in M$ with $\Vert x_i-x_i'\Vert<\varepsilon$. See \cite[Example 4.12]{djrr21} for examples of Banach spaces with the metric $\pi$-property.

\begin{theorem}\label{theo:densiyintevacio}
Let $X$ be a Banach space with the metric $\pi$-property and whose norm locally depends upon finitely many coordinates (in particular $X=c_0(I)$) and $Y$ be a Hilbert space. Then both $\NA_\pi(X\pten Y)$ and $X\pten Y\setminus \NA_\pi(X\pten Y)$ are dense in $X\pten Y$. 
\end{theorem}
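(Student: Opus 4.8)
The plan is to obtain the statement as a synthesis of two facts, each of which is either already available in the literature or has just been established: the density of $\NA_\pi(X\pten Y)$ will come from the metric $\pi$-property of $X$, while the density of its complement is precisely point (3) of Remark~\ref{remark:densicomplement} specialised to the present hypotheses. Throughout one reads the statement with $X$ and $Y$ infinite-dimensional, which is the only case of interest (for a finite-dimensional $Y$ one has $X\otimes Y=X\pten Y$ and the complement is empty); the motivating instance $X=c_0(I)$ with $I$ infinite does fit, since $c_0(I)$ has the metric $\pi$-property by \cite[Example~4.12]{djrr21} and closed subspaces of $c_0$ have norm locally depending upon finitely many coordinates.

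For the first half I would invoke \cite[Theorem~4.8]{djrr21}. Its hypotheses are met: $X$ has the metric $\pi$-property, and for every finite-dimensional $1$-complemented subspace $F$ of $X$ one has $\NA_\pi(F\pten Y)=F\pten Y$. Indeed, any $z\in F\otimes Y$ can be written $z=\sum_{i=1}^{m}b_i\otimes y_i$ with $\{b_i\}$ a basis of $F$, so $z\in F\otimes V$ for the finite-dimensional subspace $V=\operatorname{span}\{y_1,\dots,y_m\}$, which is $1$-complemented in the Hilbert space $Y$; then $F\pten V$ is a $1$-complemented, hence isometric, subspace of $F\pten Y$, and in the finite-dimensional space $F\pten V$ the unit ball coincides with $\co(B_F\otimes B_V)$ (the convex hull of the compact set $B_F\otimes B_V$, hence compact and in particular closed), so every element attains its projective norm there and therefore in $F\pten Y$. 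Hence \cite[Theorem~4.8]{djrr21} yields that $\NA_\pi(X\pten Y)$ is norm-dense in $X\pten Y$, with no further work required.

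For the second half I would appeal directly to Remark~\ref{remark:densicomplement}(3): since $X$ is infinite-dimensional with norm locally depending upon finitely many coordinates and $Y$ is an infinite-dimensional Hilbert space, Theorem~\ref{theo:nonnormattainfincor} gives $\NA_\pi(X\pten Y)\subseteq X\otimes Y$, and $X\otimes Y$ is a proper subspace of $X\pten Y$ because the projective norm is incomplete there. Thus, for $z\in X\pten Y$: if $z\notin X\otimes Y$ then $z\notin\NA_\pi(X\pten Y)$; and if $z\in X\otimes Y$, fixing any $v\in X\pten Y\setminus X\otimes Y$ we have $z+\tfrac1n v\to z$ with $z+\tfrac1n v\notin X\otimes Y$ (otherwise $v=n\bigl((z+\tfrac1n v)-z\bigr)\in X\otimes Y$), so $z+\tfrac1n v\notin\NA_\pi(X\pten Y)$; hence the complement is dense. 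The two halves together complete the proof, and there is no substantial obstacle: the only points deserving care are that the input $\NA_\pi(F\pten Y)=F\pten Y$ needed by \cite[Theorem~4.8]{djrr21} is genuinely available for Hilbert $Y$ (checked above) and that the statement is understood with both factors infinite-dimensional.
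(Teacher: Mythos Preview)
Your proposal is correct and follows precisely the approach indicated in the paper: the paper does not write out a proof of this theorem but explicitly states just before it that the result follows by combining Remark~\ref{remark:densicomplement}(3) with \cite[Theorem~4.8]{djrr21}, which is exactly your two-half argument. Your additional verification that $\NA_\pi(F\pten Y)=F\pten Y$ for finite-dimensional $1$-complemented $F\subseteq X$ (needed as input for \cite[Theorem~4.8]{djrr21}) and your remark that the statement is to be read with both factors infinite-dimensional are useful clarifications that the paper leaves implicit.
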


\textbf{Acknowledgements:} The author thanks S. Dantas, M. Jung, M. Mazzitelli and J. Tom\'as Rodr\'iguez for sharing an unpublished version of the preprint \cite{djmr}. He also thanks L. Garc\'ia-Lirola and M. Mazzitelli for useful conversations on the topic of the paper.

\end{document}